\numberwithin{equation}{section}
\definecolor{brown}{cmyk}{0, 0.72, 1, 0.45}
\definecolor{grey}{gray}{0.5}
\renewcommand{\epsilon}{\varepsilon}
\def\deg{\text{deg}}
\newcounter{rot}
\def\a{\alpha}   \def\D{\Delta}
\def\G{\Gamma}
\newtheorem*{conjecture*}{Conjecture}
\newtheorem{theorem}{Theorem}[section]
\newtheorem*{theorem*}{Theorem}
\newtheorem{lemma}[theorem]{Lemma}
\newcommand{\wt}[1]{\widetilde{#1}}
\newcommand{\wtq}{\wt{Q_I}}
\newcommand{\rbrac}[1]{\left(#1\right)}
\newcommand{\sbrac}[1]{\left[ #1\right]}
\newcommand{\cbrac}[1]{\left\{ #1\right\}}
\newcommand{\rfrac}[2]{\left(\frac{#1}{#2}\right)}
\def\E{\mathbb{E}}
\def\Var{\mbox{{\bf Var}}}
\def\P{\mathbb{P}}
\newcommand{\bfo}{\mathbbm{1}}
\newcommand{\eps}{\varepsilon}
\newcommand{\of}[1]{\left( #1 \right) }
\newcommand{\abs}[1]{\left| #1 \right|}
\newcommand{\sqbs}[1]{\left[ #1 \right]}
\newcommand{\braces}[1]{\left\{ #1 \right\}}
\newcommand{\Mean}[1]{\E\sqbs{#1}}
\newcommand{\opoo}{(1+o(1))}
\newcommand{\ix}[2]{I_{#1}^{(#2)}}
\newcommand{\ignore}[1]{}
\newcommand{\beq}[1]{\begin{equation}\label{#1}}
\newcommand{\eeq}{\end{equation}}
\newcommand{\mc}[1]{\mathcal{#1}}
\mathchardef\UrlBreakPenalty=10000
\title{The bipartite $K_{2,2}$-free process and bipartite Ramsey number $b(2, t)$}
\author{{\Large Deepak Bal}\thanks{\url{deepak.bal@montclair.edu} } \\Department of Mathematical Sciences \\ Montclair State University \\ Montclair, NJ 07043 \and {\Large Patrick Bennett}\thanks{\url{patrick.bennett@wmich.edu}, supported in part by Simons Foundation Grant \#426894.} \\Department of Mathematics \\ Western Michigan University \\ Kalamazoo, MI, 49008}
\date{}
\begin{document}
\maketitle

\begin{abstract}
The bipartite Ramsey number $b(s,t)$ is the smallest integer $n$ such that every blue-red edge coloring of $K_{n,n}$ contains either a blue $K_{s,s}$ or a red $K_{t,t}$. In the bipartite $K_{2,2}$-free process, we begin with an empty graph on vertex set $X\cup Y$, $|X|=|Y|=n$. At each step, a random edge from $X\times Y$ is added under the restriction that no $K_{2,2}$ is formed. This step is repeated until no more edges can be added. In this note, we analyze this process and show that the resulting graph witnesses that $b(2,t) =\Omega\of{t^{3/2}/\log t}$, thereby improving the best known lower bound.

\end{abstract}

\section{Introduction}

The \emph{bipartite Ramsey number} $b(s,t)$ is the smallest integer $n$ such that every blue-red edge coloring of $K_{n,n}$ contains either a blue $K_{s,s}$ or a red $K_{t,t}.$ This definition was first introduced by Beineke and Schwenk \cite{BS} in 1976. {We will find it convenient to define the {\em bipartite independence number} of a graph $G \subseteq K_{n, n}$ as the largest value of $t$ such that there are sets of vertices $A$ and $B$ on opposite sides of the bipartition of $K_{n, n}$ such that $|A|=|B|=t$ and $G$ has no edges in $A \times B$. Thus, $b(s, t) > n$ if and only if there exists a $K_{s, s}$-free graph $G\subseteq K_{n, n}$ that has bipartite independence number less than $t$.}
 The best known lower and upper bounds on the diagonal problem, due to Hattingh and Henning \cite{HH} and Conlon \cite{Con} respectively, are
 \[ \frac{\sqrt{2}}{e}t 2^{t/2}   \le  b(t,t) \le (1+o(1))2^{t+1}\log_2 t. \]
As is the case for the ordinary Ramsey number, there is an exponential gap which remains to be closed.

In this note, we are concerned with the simplest nontrivial ``off-diagonal'' case, $b(2,t)$. The best known lower and upper bounds, both due to Caro and Rousseau \cite{CR}, are
\[ \Omega\of{\of{\frac{t}{\log t}}^{3/2}}\le  b(2,t) \le O\of{\of{\frac{t}{\log t}}^2}.\]
The upper bound follows directly from the well known upper bounds on the Zarankiewicz problem. $z(n,s)$ is the largest number of edges in a $K_{s,s}$-free subgraph of $K_{n,n}$. 
{A theorem of K\"{o}vari, S\'os and Tur\'an \cite{KST} says} that $z(n,s) = O(n^{2-1/s})$. By the  pigeonhole principle, if the number of edges in $K_{n,n}$ exceeds $z(n,2) + z(n,t)$, then in any blue-red coloring of the edges, one color class must exceed its respective Zarankiewicz bound. Thus solving the inequality $n^2 > c_1n^{3/2} + c_2n^{2-1/t}$ for $n$ provides the upper bound.
The proof of the lower bound makes use of the Lov\'{a}sz Local Lemma.

Let $R(G_1, G_2)$ represent the ordinary two color Ramsey number, i.e., the smallest integer $n$ such that every blue-red edge coloring of the edges of $K_n$ contains a blue copy of $G_1$ or a red copy of $G_2$. When $G_1 = K_s, G_2=K_t$, we write $R(s,t)$.  The search for the asymptotics of the off diagonal ordinary Ramsey number $R(3,t)$ 
has a long  and interesting history as laid out by Joel Spencer in \cite{Sp}. Currently, the best known
 results differ only by a constant factor. Shearer \cite{Sh}, improving a result of Ajtai, Koml\'os and Szemer\'edi \cite{AKS}, proved that $R(3,t) \le (1+o(1))t^2/\log t$. Recently, Bohman and Keevash \cite{BK13}, and independently Fiz Pontiveros, Griffiths and Morris \cite{FGM}, proved that $R(3,t) \ge (\frac14 -o(1))t^2/\log t$. The lower bound follows from the analysis of a random process first considered by Erd\H{o}s, Suen and Winkler \cite{ESW} and now commonly referred to as the triangle-free process. This is a stochastic process in which random edges are added to an empty graph one by one under the constraint that no triangles are formed. The authors of \cite{BK13} and \cite{FGM} prove bounds on how long this process lasts and the size of the largest independent set in the resulting graph. 
 
  Of course one can also consider the $H$-free process for any $H$, where edges are randomly added one by one under the constraint that no copy of $H$ is formed. 
  Ruci\'nski and Wormald \cite{worm} {were among the first to consider such a process, analyzing} 
 the $d$-process, which is the $H$-free process where $H$ is a star on $d+1$ vertices. Further work has been done to analyze the $H$-free process for other families of graphs $H$, mostly when $H$ is a clique or a cycle (see for example Picollelli \cite{mike3, mike2} and Warnke \cite{lutz2, lutz1}). Bohman and Keevash \cite{BK10} have the most general results for the $H$-free process; they analyze the process and bound the independence number of the resulting graph for a large class of graphs $H$ including cycles of any length as well as cliques of any size (but also all strictly 2-balanced graphs), establishing new lower bounds on Ramsey numbers $R(H, K_t)$ where $H$ is any fixed cycle or clique and $t \rightarrow \infty$. Bohman, Picollelli and Mubayi \cite{BMP} studied the $H$-free process for certain hypergraphs $H$, resulting in new lower bounds for the corresponding hypergraph Ramsey numbers. 

Inspired by the previous work on $H$-free processes, we study the \emph{bipartite $K_{2,2}$-free process}, a version of the $H$-free process in a large balanced bipartite host graph (as opposed to the standard $H$-free process which uses $K_n$ as a host graph). The process begins with an empty graph $G_0$ on vertex set $X\cup Y$ where $|X|= |Y| = n$. We form the graph $G_{i}$ by adding to $G_{i-1}$ an edge $e_i$ chosen uniformly at random from all pairs of vertices in $X\times Y$ which do not already appear in $G_{i-1}$ and which do not create a copy of $K_{2,2}$. Let $M$ be the random variable representing the number of edges in the final graph produced at the end of the process. Then $G_M$ is $K_{2,2}$-free by construction.  The main contribution of this paper is to prove that with high probability, $G_M$ has bipartite independence number at most $Cn^{2/3}\log^{2/3}n$ for some constant $C$.  

\begin{theorem}\label{thm:main}
With high probability, the graph produced at the end of the bipartite $K_{2,2}$-free process has bipartite independence number $O(n^{2/3}\log^{2/3}n)$. Thus $$b(2,t) = \Omega\of{\frac{t^{3/2}}{\log t}}.$$
\end{theorem}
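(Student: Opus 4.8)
The plan is to analyze the process by the differential equation method, proving dynamic concentration for a handful of statistics, and then to control the bipartite independence number of $G_M$ by a union bound over all candidate pairs of independent sets.

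\emph{Trajectories.} Since the host graph is bipartite, adding an edge creates a $K_{2,2}$ exactly when it creates a $4$-cycle; thus a non-edge $xy$ (with $x\in X,y\in Y$) is \emph{closed} (cannot be added) at step $i$ precisely when $G_i$ contains an $x$--$y$ path of length $3$, and $G_{i+1}$ is formed by adding a uniformly random \emph{open} (non-closed) pair. Write $Q(i)$ for the number of open pairs and $q(i)=Q(i)/n^2$. Using that $K_{2,2}$-freeness forces every codegree to be at most $1$, each open pair has essentially the same number $\approx 3\bar d(i)^2 q(i)$ of open pairs whose addition would close it, where $\bar d(i)=i/n$ is the average degree; hence $\E\sbrac{Q(i)-Q(i+1)\mid G_i}\approx 1+3\bar d(i)^2 q(i)$. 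Solving the resulting ODE after the substitution $t=i\,n^{-4/3}$ predicts
\[
Q(i)\approx n^2 e^{-t^3},\qquad \deg_{G_i}(v)\approx t\,n^{1/3}\quad(\forall v),\qquad M\approx (2\log n)^{1/3}\,n^{4/3}.
\]

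\emph{Dynamic concentration.} Following the by-now-standard machinery for $H$-free processes (Bohman--Keevash, Warnke, and the triangle-free process analyses), I would prove that with high probability the above trajectories are tracked with $o(1)$ relative error for every $i$ up to $\Theta\of{n^{4/3}(\log n)^{1/3}}$; in particular $Q(i)=(1\pm o(1))\,n^2 e^{-t^3}$, every degree is $(1\pm o(1))\,t n^{1/3}$ (so $\Delta(G_i)=O\of{n^{1/3}(\log n)^{1/3}}$), and every open pair has ``closure degree'' within a constant factor of $3\bar d(i)^2 q(i)$. Each such estimate is obtained by comparing the statistic to its trajectory, checking that the one-step drift matches the ODE up to a negligible error, and applying a Freedman-type martingale inequality; self-correction of the ODEs is used to bound the probability of a first large deviation, and the bounds on degrees and codegrees keep the relevant increments controlled. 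I would arrange all of these ``global'' estimates to fail with probability at most $e^{-n^{\Omega(1)}}$. This in particular gives $M=(1+o(1))(2\log n)^{1/3}n^{4/3}$.

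\emph{Bounding the bipartite independence number.} Fix $m^{*}:=Cn^{2/3}(\log n)^{2/3}$ for a sufficiently large absolute constant $C$, and fix $A\subseteq X$, $B\subseteq Y$ with $|A|=|B|=m^{*}$. If $A\times B$ contains no edge of $G_M$ then, the process having terminated, all $(m^{*})^2$ pairs of $A\times B$ are closed; to rule this out I would track $Q_{A,B}(i)$, the number of open pairs inside $A\times B$. A pair of $A\times B$ is closed at a given step with probability $(1+o(1))\,3\bar d(i)^2/n^2$ (the same rate as globally), so $Q_{A,B}$ obeys $\dot Q_{A,B}\approx -3t^2 Q_{A,B}$, with trajectory $Q_{A,B}(i)\approx (m^{*})^2 e^{-t^3}$. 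Running the martingale argument of the previous step -- which one need only follow until the trajectory falls to a suitable polynomial threshold $n^{\delta}$, so the failure probability stays $e^{-n^{\Omega(1)}}\ll\binom{n}{m^{*}}^{-2}$ -- shows that, except with probability $e^{-n^{\Omega(1)}}$, one has $Q_{A,B}(i)\ge (1-o(1))(m^{*})^2 e^{-t^3}$ for all $i$ up to the corresponding time $m_1=\Theta\of{n^{4/3}(\log n)^{1/3}}$. On the global good event $Q(i)\le (1+o(1))n^2 e^{-t^3}$, so $Q_{A,B}(i)/Q(i)\ge (1-o(1))(m^{*})^2/n^2$ for $i\le m_1$, and a standard supermartingale argument then gives
\[
\mathbb{P}\of{A\times B \text{ is independent in } G_M}\ \le\ \exp\of{-(1-o(1))\,\tfrac{(m^{*})^2}{n^2}\,m_1}\ +\ e^{-n^{\Omega(1)}}.
\]
Since $\tfrac{(m^{*})^2}{n^2}\,m_1=\Theta\of{C^2 n^{2/3}(\log n)^{5/3}}$ while $\log\binom{n}{m^{*}}^2=(\tfrac23+o(1))\,C\,n^{2/3}(\log n)^{5/3}$, taking $C$ large enough makes the right-hand side $o\of{\binom{n}{m^{*}}^{-2}}$; a union bound over the at most $\binom{n}{m^{*}}^2$ choices of $(A,B)$ (and over the $O(1)$ global bad events) shows that with high probability $G_M$ has bipartite independence number less than $m^{*}=O\of{n^{2/3}(\log n)^{2/3}}$. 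Finally, by the definition of $b(2,t)$ this forces $b(2,t)>n$ whenever $t>m^{*}(n)$, and inverting this relation (using $\log n=\Theta(\log t)$) yields $b(2,t)=\Omega\of{t^{3/2}/\log t}$.

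\emph{Main obstacle.} The heart of the argument is the dynamic concentration, especially for the set-counting statistics $Q_{A,B}$ with a failure probability ($e^{-n^{\Omega(1)}}$) small enough to absorb the union bound over the $\binom{n}{m^{*}}^2$ pairs. The delicate point is the size of the martingale increments of $Q_{A,B}$: a single edge addition incident to a vertex with many neighbours in $A$ or in $B$ can close as many as $\Delta(G_i)^2$ pairs of $A\times B$ at once, so one must combine the degree and codegree bounds from the global analysis with a truncation of large jumps -- using the global estimates to show that jumps exceeding a polylogarithmic size are rare enough to be negligible. What makes everything fit together is the observation, exploited above, that it suffices to control $Q_{A,B}$ only while it is still polynomially large: already then $\tfrac{(m^{*})^2}{n^2}\,m_1$ exceeds $\log\binom{n}{m^{*}}^2$, which is exactly the budget the union bound requires.
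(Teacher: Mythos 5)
You take essentially the same approach as the paper: dynamic concentration of the global number of open pairs and of the closure degrees (which the paper imports from Bennett--Bohman as a black box rather than re-deriving), crude maximum-degree and subgraph-density bounds, and then a Freedman-type supermartingale for the number of open pairs inside each candidate $A\times B$ with large one-step jumps truncated, finished by exactly the union-bound comparison $\frac{(m^*)^2}{n^2}\, m_1$ versus $\log \binom{n}{m^*}^2$. The only points needing care in your sketch are quantitative: the per-set failure probability must be $\exp\of{-\Omega(n^{2/3+c})}$ for some fixed $c>0$ (not merely $e^{-n^{\Omega(1)}}$) in order to beat the $\exp\of{\Theta(n^{2/3}\log^{5/3}n)}$ union bound, and the truncation threshold for large jumps of $Q_{A,B}$ should be polynomial (the paper uses $n^{2/3-O(\eps^3)}$) rather than polylogarithmic, since the available degree/density estimates only show that steps causing polynomially large jumps are sufficiently rare.
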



\section{Proof of Theorem \ref{thm:main}}\label{sec:proof}

 Bennett and Bohman \cite{BB} proved a general result which we will find useful. We build our proof ``on top of" the proof of Theorem 1.1 in \cite{BB}, in the sense that we will use not only the statement of that theorem but also some other facts that are established in its proof. In particular, Theorem 1.1 in \cite{BB} is proved by establishing dynamic concentration of a family of random variables. In our proof, we will use (without further justification) the fact that these variables are dynamically concentrated. {One of the goals of this paper is to demonstrate the utility of \cite{BB} as a ``black box" that takes care of a lot of the work of analyzing these processes, making for shorter proofs.}

In Section \ref{sec:blackbox} we will summarize the relevant results from \cite{BB}. In Section \ref{sec:bad-dad} we prove a few lemmas which do not follow directly from the results in \cite{BB}, namely, bounds on the maximum degree and on the maximum density of subsets.  Finally, in Section \ref{sec:ind} we bound the bipartite independence number of the graph produced by the process by using similar proof techniques to those used in \cite{BK10} for analyzing the $H$-free process when $H$ is a cycle.

\subsection{The Black Box}\label{sec:blackbox}
In this section, we summarize the results from Bennett and Bohman \cite{BB} which we will utilize. Let $ {\mathcal H} $ be a hypergraph on vertex
set $V$ (i.e. ${\mathcal H}$ is a collection of subsets of $V$ and the sets in this
collection are the {\em edges} of $\mc{H}$).  An {\em independent set} in $\mc{H}$ 
is a set $ I \subseteq V $ such that
$I$ contains no edge of $ {\mathcal H} $.  The {\em random greedy independent set process} (or just the {\em independent process}) forms a maximal
independent set in $ {\mathcal H}$ by iteratively choosing vertices at random to be 
in the independent set.  To be precise, we begin with $ {\mathcal H}(0) = {\mathcal H}$, $ V(0) = V$ 
and $ I(0) = \emptyset $.  Given independent set $I(i)$ and 
hypergraph $ \mc{H}(i) $ on vertex set $V(i)$, a vertex $v \in V(i)$ is chosen uniformly at random
and added to $ I(i)$ to form $ I(i+1) $.  The new vertex set $V(i+1)$ and new hypergraph $\mc{H}(i+1)$ are formed by 
\begin{enumerate}
\item removing $v$ from every edge in $ \mc{H}(i) $ that contains $v$ (so these edges become smaller edges),
\item deleting $v$ from $V(i)$, and
\item deleting from $V(i)$ every vertex that is in a singleton edge (and any such edge containing a deleted vertex is removed).
\end{enumerate}

Define the {\em degree} of a set $ A \subseteq V$ to be the number of edges of 
$ {\mathcal H} $ that contain $A$.
For $ a = 2, \dots, r-1 $ we define $ \Delta_a( {\mathcal H}) $
to be the maximum degree of $A$ over $A \in \binom{V}{a} $.  We also
define the {\em $b$-codegree} of a pair of distinct vertices $ v,v'$ to be 
the number of pairs of edges $e,e' \in {\mathcal H} $ such that $ v \in e \setminus e', v' \in e' \setminus e$ 
and $ |e \cap e'|=b $.  We let $ \G_b ( \mc{H}) $ be the maximum 
$b$-codegree of $ {\mathcal H} $.
\begin{theorem}[Theorem 1.1 in \cite{BB}]
\label{thm:BB}
Let $r$ and $ \epsilon > 0 $ be fixed.  Let 
$ {\mathcal H} $ be a $ r$-uniform, $D$-regular hypergraph on $N$ 
vertices such that $ D > N^{\epsilon} $.  If
\begin{equation}
\label{eq:degcond}
 \Delta_\ell( {\mathcal H} ) < D^{ \frac{r-\ell}{r-1} - \epsilon} \ \ \ \text{ for }
\ell = 2, \dots, r-1 
\end{equation}
and $ \Gamma_{r-1}( {\mathcal H} ) < D^{1- \epsilon} $
then the random greedy independent set
algorithm produces an independent set $I$ in $ {\mathcal H} $ with
\begin{equation}
\label{eq:lowerbound}
|I| = \Omega_{r, \eps}\left( N \cdot \left( \frac{\log N}{ D} \right)^{\frac{1}{r-1}} \right)
\end{equation}
with probability $ 1 - \exp\left\{ - N^{\Omega_{r, \eps}(1)} \right\} $.
\end{theorem}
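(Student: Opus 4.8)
The plan is to run the random greedy independent set process and analyze it by the differential equations method. Since each step enlarges $I$ by exactly one vertex, the final independent set has size equal to the (random) number $M$ of steps taken before the vertex set is exhausted, so, writing $Q(i)$ for the number of available vertices after $i$ steps, it suffices to show that $Q(i)>0$ for every $i\le c\,N(\log N/D)^{1/(r-1)}$ with probability $1-\exp\{-N^{\Omega_{r,\eps}(1)}\}$, for a suitable $c=c(r,\eps)$. I would track $Q(i)$ together with the variables $Y_j(i)$, $2\le j\le r$, where $Y_j(i)$ is the number of edges of $\mc H$ having exactly $j$ available vertices after $i$ steps; a vertex is killed at a given step precisely when that step selects another vertex lying with it in an edge counted by $Y_2$. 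Initially $Y_r(0)=|E(\mc H)|=DN/r$, $Y_j(0)=0$ for $j<r$, and $Q(0)=N$.

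A heuristic one-step computation, valid up to error terms that the hypotheses will let us control, gives
\[
\E[Q(i+1)-Q(i)\mid\mc F_i]\;\approx\;-1-\frac{2Y_2(i)}{Q(i)},\qquad
\E[Y_j(i+1)-Y_j(i)\mid\mc F_i]\;\approx\;\frac{(j+1)Y_{j+1}(i)-jY_j(i)}{Q(i)}-R_j(i),
\]
(with the convention $Y_{r+1}\equiv0$), where $R_j(i)$ is a \emph{self-correcting} correction term coming from the fact that the vertices removed at a step are exactly those carrying abnormally many nearly complete edges, so each removal destroys more partial edges than an ``average'' vertex would. A convenient time change $d\tau=di/Q$ turns this into an autonomous ODE system, which one analyzes (the $Y_j$ form a finite linear cascade fed by $Y_r$, and $Q$ is then recovered by a single quadrature; the correction terms only deform this solution) in order to read off the time at which $Q$ would vanish. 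The outcome is a trajectory along which $Q$ stays positive until time of order $N(\log N/D)^{1/(r-1)}$. The terms $R_j$ are exactly what produces the logarithmic factor: if they are dropped, the trajectory dies after only $\Theta_r(N/D^{1/(r-1)})$ steps — which merely recovers the classical Tur\'an-type lower bound for independent sets in uniform hypergraphs — and it is the self-correcting feedback that lengthens the process by the factor $(\log N)^{1/(r-1)}$.

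The substantive step is to prove that $Q$ and the $Y_j$ actually follow these trajectories. For each tracked quantity I would fix an error envelope — wide enough to absorb accumulated fluctuations, narrow enough that the drift estimates above remain accurate — and show the quantity stays inside it by exhibiting one-sided (super- or sub-) martingales and invoking a Freedman-type martingale concentration inequality. The hypotheses enter here: the degree bounds $\Delta_\ell(\mc H)<D^{(r-\ell)/(r-1)-\eps}$ control both the maximum one-step change of each $Y_j$ (a single selection touches only edges through boundedly many small sets) and the conditional variances required by Freedman's inequality, and they also yield the small-codegree estimates needed to bound the lower-order drift corrections; the remaining and dominant error — the ``collision'' event in which a single selection completes two distinct edges at one vertex, or kills a vertex through an edge one of whose other vertices had itself been killed rather than selected — is controlled using $\Gamma_{r-1}(\mc H)<D^{1-\eps}$. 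A union bound over the $O_r(1)$ tracked quantities and the $O(N)$ steps, with $D>N^{\eps}$ making each individual failure probability $\exp\{-N^{\Omega_{r,\eps}(1)}\}$, completes the argument, and $|I|=M=\Omega_{r,\eps}(N(\log N/D)^{1/(r-1)})$ follows.

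I expect the principal obstacle to lie in controlling the process through the later part of the window, where $Q(i)$ has dropped well below $N$: there the relative fluctuations of $Q$ and of the degree-type variables are largest, the self-correcting feedback $R_j$ is strongest (so the drift is most sensitive to small errors), and the error envelopes must be chosen delicately for the martingale bounds to close all the way down to $Q$ of order a small fixed power of $N$. A secondary but unavoidable task is to organize the bookkeeping for the cascading deaths — vertices killed via edges that contain previously killed vertices — and absorb their effect into the error terms, once more via the codegree hypothesis.
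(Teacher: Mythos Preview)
The paper does not prove this theorem. It is Theorem~1.1 of Bennett--Bohman~\cite{BB}, quoted and used as a black box; the authors say explicitly that they build ``on top of'' that proof and invoke its dynamic-concentration estimates \eqref{eq:open} and \eqref{eq:d2} without further justification. So there is no proof in this paper to compare your proposal against.

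That said, what the paper reveals about the argument in~\cite{BB} lets me make two remarks on your sketch. First, \cite{BB} tracks not only the global count $Q$ but the \emph{per-vertex} degrees $d_2(e)$ (for every open $e$), and this is not an incidental choice: your correction terms $R_j$ encode the effect on $Y_j$ of the vertices \emph{killed} at each step, and those are exactly the vertices whose $d_2$ is positive, so bounding $R_j$ to the required accuracy requires control of the $d_j$-values at individual vertices --- information the aggregates $Y_j$ alone do not carry. In practice one shows each $d_j(e)$ concentrates and takes a union bound over all $N$ vertices, not over $O_r(1)$ global variables as your last paragraph suggests. Second, your assertion that the $R_j$ are what produce the factor $(\log N)^{1/(r-1)}$ looks off: the heuristic trajectory $Q(i)\approx N\exp\bigl(-D(i/N)^{r-1}\bigr)$, which already yields that factor, comes straight from the uncorrected drift; the $R_j$ are lower-order terms to be controlled, not the mechanism for the gain over the Tur\'an-type bound.
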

 The $K_{2,2}$-free process in the host graph $K_{n, n}$ is an instance of the independent process in a hypergraph $\mc{H}_{K_{2,2}} = \mc{H}_{K_{2,2}}(n)$, where each vertex in $\mc{H}_{K_{2,2}}$ corresponds to an edge in $K_{n, n}$ and edges in $\mc{H}_{K_{2,2}}$ correspond to sets of edges in $K_{n, n}$ that form copies of $K_{2, 2}$. Thus $\mc{H}_{K_{2,2}}$ is a $4$-uniform, $(n-1)^2$-regular hypergraph on $n^2$ vertices. In the notation of 
 Theorem \ref{thm:BB}, we have $r=4$, $D=(n-1)^2$, $N=n^2$. Also, we have $\D_2(\mc{H}_{K_{2,2}}) = O(n)$, $\D_3(\mc{H}_{K_{2,2}}) = O(1)$ and $\G_3(\mc{H}_{K_{2,2}})=0$, so the conditions of Theorem \ref{thm:BB} are met with $r=4$ and any $0 < \eps < 1/3 $. Thus, Theorem \ref{thm:BB} tells us that the bipartite $K_{2,2}$-free process admits $\Omega\of{ n^{4/3} \log^{1/3} n } $ many edges. For the rest of the paper, we will almost exclusively use language referring to the $K_{2,2}$-free process as opposed to language referring to the independent process on $\mc{H}_{K_{2,2}}$.
 
Now we state the dynamic concentration results that are established in \cite{BB} as part of the proof of Theorem \ref{thm:BB}. As in \cite{BB} we define the {\em scaled time parameter} 
$$t=t(i):= \frac{D^\frac{1}{r-1}}{N} \cdot i = \frac{(n-1)^{2/3}}{n^2} \cdot i  = \frac{i}{n^{4/3}}\of{1+O\of{\frac{1}{n}}} ,$$
and
\[
q=q(t):=e^{-t^3}.
\]
We let $Q = Q(i)$ be the set of {\em open} edges at step $i${, that is,} the edges that could be chosen without creating a $K_{2,2}$. For each open edge $e \in Q(i)$ and we define $d_2(e)$ to be the number of copies of $K_{2,2}$ that contain $e$, contain one additional open edge, and two chosen edges. Roughly speaking $d_2(e)$ is the number of open edges that, if chosen, would close $e$.
In the proof of their theorem, Bennett and Bohman establish dynamic concentration of the random variables $Q$ and $d_2(e)$ (see equations (8) and (9) in \cite{BB}). In particular, their proof implies that there exists a positive constant $\eps$ such that w.h.p. 
\begin{align} 
\label{eq:open}
| Q | & \in \of{1\pm n^{-10\eps^3}}n^2q   \\
\label{eq:d2}
d_2(e)  & \in \of{1\pm n^{-10\eps^3}}3n^{2/3}t^2q  \;\;\; \mbox{   for all } e\in Q
\end{align}
for all $i \le  i_{max} :=\eps n^{4/3} \log^{1/3} n$. Note that lines \eqref{eq:open} and \eqref{eq:d2} significantly simplify the equations in \cite{BB} which actually refer to several constants. We satisfy \eqref{eq:open} and \eqref{eq:d2} by choosing $\eps$ to be sufficiently small (and we will continue to assume $\eps>0$ is sufficiently small throughout the paper). Indeed, we will write many inequalities that only hold under the assumption that $n$ is sufficiently large and $\eps>0$ is sufficiently small. 

\subsection{High degrees and dense subgraphs}\label{sec:bad-dad}
Let $G_i$ be the $K_{2, 2}$-free graph at step $i$.  In this section we show that w.h.p. $G_i$ does not have any vertices of degree too high, nor subgraphs that are too dense. More specifically, $G_i$ resembles (at least in these aspects) a binomial random bipartite graph with edge probability $\frac{i}{n^2}$. Let $\mc{E}_i$ be the event that \eqref{eq:open}, \eqref{eq:d2} hold for all steps up to and including step $i$. 

\begin{lemma}\label{lem:spec-edges}
For any set of edges $F\subseteq E(K_{n,n})$, we have  $$\P\sbrac{\mc{E}_{i_{max}} \mbox{ and } F \subseteq E(G_{i_{max})}} \le n^{(-2/3 + 2\eps^3)|F|}.$$
\end{lemma}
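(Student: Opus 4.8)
\emph{Plan.} The idea is to decompose the event according to the order in which the edges of $F$ enter the graph, and then to peel the edges off one at a time, using that under $\mc{E}_{i_{max}}$ each random choice is nearly uniform over a set of size about $n^2q$. Write $k=|F|$ and enumerate $F=\set{f_1,\ldots,f_k}$. Observe that $\mc{E}_{i_{max}}$ is the intersection of the events ``\eqref{eq:open} and \eqref{eq:d2} hold at step $j$'' over $j\le i_{max}$, so $\mc{E}_{i_{max}}\subseteq\mc{E}_j$ for every $j\le i_{max}$, and on $\mc{E}_{i_{max}}$ the number of edges available to be chosen at step $i$ is at least $\of{1-n^{-10\eps^3}}n^2q(t(i))$ for all $i\le i_{max}$. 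If $F\subseteq E(G_{i_{max}})$ then the edges of $F$ were added at distinct steps, so there is a well-defined permutation $\sigma$ of $[k]$ recording the order of appearance; decomposing over $\sigma$ and over the steps $i_1<i_2<\dots<i_k\le i_{max}$ at which the edges appear gives
\[
\P\sbrac{\mc{E}_{i_{max}},\ F\subseteq E(G_{i_{max}})}\;=\;\sum_{\sigma}\ \sum_{i_1<\dots<i_k\le i_{max}}\P\sbrac{\mc{E}_{i_{max}},\ f_{\sigma(1)}=e_{i_1},\ \dots,\ f_{\sigma(k)}=e_{i_k}}.
\]

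Next I would fix $\sigma$ and $i_1<\dots<i_k$ and peel off the last edge by conditioning on the history $G_{i_k-1}$. The events $\mc{E}_{i_k-1}$ and $\set{f_{\sigma(j)}=e_{i_j}}$ for $j<k$ are $G_{i_k-1}$-measurable, while the conditional probability that $e_{i_k}=f_{\sigma(k)}$ equals $1/|Q(i_k-1)|$ if $f_{\sigma(k)}$ is still open and $0$ otherwise; and on $\mc{E}_{i_k-1}$ we have $|Q(i_k-1)|\ge\of{1-o(1)}n^2q(t(i_k))$ because $q(t(i_k-1))=\of{1+o(1)}q(t(i_k))$. Hence the summand is at most $\tfrac{1+o(1)}{n^2q(t(i_k))}\cdot\P\sbrac{\mc{E}_{i_k-1},\ f_{\sigma(1)}=e_{i_1},\dots,f_{\sigma(k-1)}=e_{i_{k-1}}}$; weakening $\mc{E}_{i_k-1}$ to $\mc{E}_{i_{k-1}-1}$ and iterating down to the first edge bounds it by $\prod_{j=1}^{k}\tfrac{1+o(1)}{n^2q(t(i_j))}$. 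Summing over $i_1<\dots<i_k$ using $\sum_{i_1<\dots<i_k}\prod_j a_{i_j}\le\tfrac1{k!}\of{\sum_i a_i}^k$ (valid for nonnegative $a_i$), and then summing over the $k!$ permutations $\sigma$, the $k!$'s cancel and we get
\[
\P\sbrac{\mc{E}_{i_{max}},\ F\subseteq E(G_{i_{max}})}\;\le\;S^{\,k},\qquad\text{where}\qquad S:=\sum_{i=1}^{i_{max}}\frac{1+o(1)}{n^2\,q(t(i))}.
\]

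Finally I would estimate $S$. Since $1/q(t)=e^{t^3}$ and consecutive values of $t(i)$ differ by $\of{1+o(1)}n^{-4/3}$, comparing the sum to an integral and using that $e^{t^3}$ is increasing together with $t(i_{max})=\of{1+o(1)}\eps\log^{1/3}n$ gives
\[
S\;\le\;\frac{1+o(1)}{n^{2/3}}\int_0^{t(i_{max})}e^{t^3}\,dt\;\le\;\frac{1+o(1)}{n^{2/3}}\cdot t(i_{max})\,e^{t(i_{max})^3}\;=\;n^{-2/3+\eps^3+o(1)}\;\le\;n^{-2/3+2\eps^3}
\]
for $n$ sufficiently large, so that $S^k\le n^{(-2/3+2\eps^3)|F|}$, as required.

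The calculations above are routine; the one point needing genuine care is the conditioning, namely keeping the ``good event'' inside the nested conditional expectations — which works precisely because $\mc{E}_{i_{max}}$ is a decreasing intersection of events measurable with respect to the growing prefixes of the history, so that at each peeling step the relevant lower bound on $|Q|$ is available before the random choice is revealed. The other thing not to lose sight of is the bookkeeping: the ordering constraint $i_1<\dots<i_k$ must be retained precisely in order to harvest the factor $1/k!$ that cancels the $k!$ from the sum over permutations, since dropping it would cost a multiplicative $k!$, which would be fatal in the intended application where $|F|$ is polynomial in $n$.
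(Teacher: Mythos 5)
Your proof is correct and takes essentially the same route as the paper's: a union bound over the (ordered) steps at which the edges of $F$ are chosen, with each step's conditional choice probability bounded by $1/|Q|$ via the dynamic concentration \eqref{eq:open} on the good event --- your sum over permutations together with the $1/k!$ from the ordered-steps sum is exactly the paper's falling factorial $(i_{max})_{|F|}$, and your time-dependent bound $1/(n^2 q(t(i)))$ is ultimately estimated by its worst (final-time) value just as in the paper. The only nit is that the sum-to-integral comparison for the increasing integrand $e^{t^3}$ with right endpoints goes the wrong way as literally stated, but this is immaterial since bounding every summand of $S$ by $\frac{1+o(1)}{n^2}e^{t(i_{max})^3}$, which is what your next inequality amounts to, yields the same conclusion $S\le n^{-2/3+2\eps^3}$.
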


\begin{proof}
 The probability that all edges of $F$ are chosen is at most the number of ways to specify which steps these edges will be chosen multiplied by the probability of choosing the prescribed edges in the specified steps. If we know that $\mc{E}_{i_{max}}$ holds,then for any $i \le i_{max}$
 \[Q(i)\ge (1-n^{-10\eps^3 })n^2q\of{\frac{i_{max}}{n^{4/3}}} =(1-n^{-10\eps^3 })n^2q(\eps \log^{1/3} n) \ge (1+o(1))n^{2-\eps^3} \] and so the probability of choosing a particular edge on a particular step is at most $\frac{1}{Q} < \frac{1+o(1)}{ n^{2-\eps^3}}$ (conditional on the history of the process). So we have that the probability that all edges of $F$ are chosen is at most 
 $$(\eps n^{4/3} \log^{1/3}n)_{|F|} \cdot \rfrac{1+o(1)}{ n^{2-\eps^3}}^{|F|} < n^{(-2/3 + 2\eps^3)|F|}.$$
\end{proof}

\begin{lemma}\label{cor:deg}
W.h.p. we have
\begin{equation}\label{eq:deg}
{\deg(v) \le n^{1/3 + 3 \eps^3}  \quad \mbox{for all vertices $v$}.}
\end{equation}

\end{lemma}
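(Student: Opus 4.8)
The plan is to run a union bound over the at most $2n$ vertices, combined with a first-moment estimate for a single vertex. The point that makes this more than a triviality---$n^{1/3+3\eps^3}$ is only a small polynomial factor above the average degree $\Theta(n^{1/3}\log^{1/3}n)$ at step $i_{max}$---is the elementary observation that at every step there are at most $n$ \emph{open} edges incident to a given vertex $v$, namely at most one for each potential neighbour of $v$. Together with the lower bound $Q(i)\ge\tfrac12 n^{2-\eps^3}$, valid for all $i\le i_{max}$ on the event $\mc{E}_{i_{max}}$ (this is the estimate on $Q$ derived inside the proof of Lemma~\ref{lem:spec-edges}), this shows that, conditioned on the history of the process, the probability that the edge chosen at step $i$ is incident to $v$ is at most $n/Q(i)\le 2n^{-(1-\eps^3)}$, as long as $\mc{E}_{i_{max}}$ has not yet been violated.

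Granting this, I would fix a vertex $v$, set $k:=\rdup{n^{1/3+3\eps^3}}$, and note that $\deg_{G_{i_{max}}}(v)\ge k$ forces the existence of a $k$-element set $S\subseteq\{1,\dots,i_{max}\}$ of steps at which the chosen edge is incident to $v$. Summing over the $\binom{i_{max}}{k}$ choices of $S$ and, for each $S$, exposing the process step by step while applying the conditional bound above at the steps in $S$, I would obtain
\[
\P\sbrac{\mc{E}_{i_{max}}\ \mbox{and}\ \deg_{G_{i_{max}}}(v)\ge k}\;\le\;\binom{i_{max}}{k}\rbrac{\frac{2}{n^{1-\eps^3}}}^{k}\;\le\;\rbrac{\frac{2e\, i_{max}}{k\, n^{1-\eps^3}}}^{k}.
\]
Plugging in $i_{max}=\eps n^{4/3}\log^{1/3}n$ and $k\ge n^{1/3+3\eps^3}$, the base of the last power is at most $2e\eps\, n^{-2\eps^3}\log^{1/3}n\le n^{-\eps^3}$ for $n$ large, so the whole quantity is at most $n^{-\eps^3 k}\le n^{-\eps^3 n^{1/3+3\eps^3}}$, which is far smaller than $1/(2n)$. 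A union bound over the at most $2n$ vertices then gives that with probability $1-o(1)$ either $\mc{E}_{i_{max}}$ fails or \eqref{eq:deg} holds at step $i_{max}$; since $\mc{E}_{i_{max}}$ holds w.h.p.\ by \eqref{eq:open}--\eqref{eq:d2}, \eqref{eq:deg} holds w.h.p.\ for $G_{i_{max}}$, and hence for every $G_i$ with $i\le i_{max}$ because degrees do not decrease along the process.

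I do not anticipate a serious obstacle. The two points that need care are the bookkeeping of the conditioning---one invokes the lower bound on $Q(i)$ guaranteed by $\mc{E}_{i_{max}}$ while exposing the chosen edges one at a time---and the (easy but essential) fact that $v$ has at most $n$ candidate neighbours. The latter is exactly why one cannot simply apply Lemma~\ref{lem:spec-edges} with $F$ a $k$-element set of edges at $v$: that route requires an additional union bound over the $\binom{n}{k}$ choices of $F$, and the resulting factor $n^{k}$ swamps the $n^{(-2/3+2\eps^3)k}$ the lemma provides.
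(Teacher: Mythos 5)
Your proof is correct, but it follows a different route from the paper, and your closing remark about the paper-style route is mistaken. The paper proves this lemma exactly the way you dismiss: it applies Lemma \ref{lem:spec-edges} with $F$ a star of $k=n^{1/3+3\eps^3}$ edges at a vertex $v$, takes a union bound over the $2n\binom{n}{k}$ choices of $v$ and of the neighbour set, and finishes with Markov's inequality. Your objection that the union bound over neighbour sets costs a factor $n^{k}$ is too crude: since $k$ is polynomially large, $\binom{n}{k}\le (ne/k)^{k}=\exp\cbrac{k\log(ne/k)}\approx n^{(2/3-3\eps^3)k}$, and multiplied by the $n^{(-2/3+2\eps^3)k}$ from Lemma \ref{lem:spec-edges} this gives $\exp\cbrac{-(1+o(1))\eps^3 k\log n}$, which comfortably absorbs the factor $2n$; this is precisely the computation in the paper, and it is where the exponent $3\eps^3$ in the degree bound comes from. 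Your alternative — union over the $\binom{i_{max}}{k}$ sets of steps at which $v$ could be hit, using that at most $n$ open pairs are incident to $v$ so that the conditional probability of hitting $v$ at a given step is at most $n/Q\le 2n^{-(1-\eps^3)}$ on the event $\mc{E}_{i_{max}}$ — is also valid, and the conditioning bookkeeping you describe is the same as that already used inside the proof of Lemma \ref{lem:spec-edges} (your lower bound on $Q$ is exactly the one derived there); the only point to state carefully if written out is that $e_i$ is uniform over the open pairs of $G_{i-1}$, so the bound on $Q$ invoked at step $i$ must be the one measurable with respect to the history before step $i$. What your version buys is that it bypasses Lemma \ref{lem:spec-edges} for this statement and works directly with steps rather than with specified edge sets; what the paper's version buys is that one lemma (Lemma \ref{lem:spec-edges}) serves both this degree bound and the density bound of Lemma \ref{cor:dense} with no extra process arguments.
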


\begin{proof}
By Lemma \ref{lem:spec-edges}, we have that the expected number of vertices of degree at least $n^{1/3 + 3 \eps^3}$ at step $i_{max}$ (and hence at any step in the process) is at most 
\begin{align*}
2n \cdot \binom{n}{n^{1/3 + 3 \eps^3}} \cdot n^{(-2/3 + 2\eps^3)n^{1/3 + 3 \eps^3}}&\le    2n\cdot\exp\braces{n^{1/3 + 3\eps^3}\of{\log\of{\frac{ne}{n^{1/3 + 3\eps^3}}}  + (-2/3 + 2\eps^3)\log n   }    }\\
&=2n\cdot \exp\braces{n^{1/3 + 3\eps^3}\of{ \of{2/3  -3\eps^3  -2/3 + 2\eps^3}\log n     +1}  }  \\ 
&= 2n \cdot \exp \cbrac{- \opoo\eps^3n^{1/3 + 3 \eps^3} \log n}= o(1).
\end{align*}
The result follows from Markov's inequality.
\end{proof}

\begin{lemma}\label{cor:dense}
W.h.p. we have
\begin{equation}\label{eq:dense}
{e(A, B) \le 2 \eps^{-3} \max\{a+b, abn^{-2/3+3\eps^3} \}\quad  \mbox{for all $A \subseteq X$ and $B \subseteq Y$ with $|A|=a, |B|=b$.}}
\end{equation}
\end{lemma}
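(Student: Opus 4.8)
The plan is to run a first moment argument parallel to the proof of Lemma~\ref{cor:deg}, with Lemma~\ref{lem:spec-edges} again doing the substantive work; as there, it suffices to control $G_{i_{max}}$, since $G_i\subseteq G_{i_{max}}$ and hence $e_{G_i}(A,B)\le e_{G_{i_{max}}}(A,B)$ for every $i\le i_{max}$. First, fix integers $a,b\ge 1$ and set
$$m=m(a,b):=\floor{2\eps^{-3}\max\braces{a+b,\ abn^{-2/3+3\eps^3}}}+1,$$
so that $m>2\eps^{-3}(a+b)$ and $m>2\eps^{-3}\,abn^{-2/3+3\eps^3}$ simultaneously. A failure of \eqref{eq:dense} for this pair $(a,b)$ produces $A\in\binom{X}{a}$, $B\in\binom{Y}{b}$ with $e_{G_{i_{max}}}(A,B)\ge m$, and hence an $m$-element set $F\subseteq A\times B$ with $F\subseteq E(G_{i_{max}})$. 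I would then apply Lemma~\ref{lem:spec-edges} to each such $F$ and union bound over the at most $\binom{n}{a}\binom{n}{b}\binom{ab}{m}$ triples $(A,B,F)$, which gives that the probability that $\mc{E}_{i_{max}}$ holds while \eqref{eq:dense} fails for $(a,b)$ is at most
$$\binom{n}{a}\binom{n}{b}\binom{ab}{m}\,n^{(-2/3+2\eps^3)m}\ \le\ (en)^{a+b}\of{\frac{eab}{m}}^{m}n^{(-2/3+2\eps^3)m},$$
using $\binom{n}{a}\le(en/a)^a\le(en)^a$ and $\binom{ab}{m}\le(eab/m)^m$.

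The next step is to simplify the right-hand side. Since $m>2\eps^{-3}\,abn^{-2/3+3\eps^3}$ one has $eab/m<\tfrac{e\eps^3}{2}\,n^{2/3-3\eps^3}<n^{2/3-3\eps^3}$ (this is where the ``dense'' alternative inside the $\max$ is needed, together with $\eps$ being small), whence
$$\of{\frac{eab}{m}}^{m}n^{(-2/3+2\eps^3)m}<n^{-\eps^3 m}<n^{-2(a+b)},$$
the last inequality because $m>2\eps^{-3}(a+b)$. So the displayed probability is at most $(en)^{a+b}n^{-2(a+b)}=(e/n)^{a+b}$, and summing over all $a,b\ge 1$ gives $\sum_{a,b\ge 1}(e/n)^{a+b}=(e/n)^2(1-e/n)^{-2}=o(1)$. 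Thus w.h.p.\ either $\mc{E}_{i_{max}}$ fails or \eqref{eq:dense} holds, and since \eqref{eq:open} and \eqref{eq:d2} hold w.h.p.\ so does $\mc{E}_{i_{max}}$, giving \eqref{eq:dense}.

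I do not expect a genuine obstacle here beyond bookkeeping; Lemma~\ref{lem:spec-edges} supplies everything substantive, exactly as in Lemma~\ref{cor:deg}. The one point needing care is the choice of the leading constant $2\eps^{-3}$: it must be large enough that the enumeration cost $(a+b)\log(en)$ of choosing $A$ and $B$ is beaten by the net saving $\eps^3 m\log n>2(a+b)\log n$ that survives after the per-edge factor $n^{-2/3+2\eps^3}$ from Lemma~\ref{lem:spec-edges} absorbs the $\binom{ab}{m}$ term, and the term $abn^{-2/3+3\eps^3}$ inside the $\max$ is present precisely so that $eab/m\le n^{2/3-3\eps^3}$ in the regime where $ab$ is large. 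I would also confirm the trivial edge cases (for instance $\binom{ab}{m}=0$ when $m>ab$, and $a$ or $b$ equal to $1$), which are handled automatically by the same bounds.
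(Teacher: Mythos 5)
Your proposal is correct and follows essentially the same route as the paper: a first-moment union bound over $A$, $B$, and an edge set $F\subseteq A\times B$ of size just above the threshold, with Lemma \ref{lem:spec-edges} supplying the probability $n^{(-2/3+2\eps^3)|F|}$ and the two alternatives in the $\max$ used exactly as in the paper (the $abn^{-2/3+3\eps^3}$ term to kill the $\binom{ab}{\cdot}$ factor, the $a+b$ term to beat the $n^{a+b}$ enumeration). The only differences are cosmetic bookkeeping (your floor-plus-one choice of $m$ and explicit handling of monotonicity and of the event $\mc{E}_{i_{max}}$).
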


\begin{proof}
Set $h=h(a,b)= 2 \eps^{-3} \max\{a+b, abn^{-2/3+3\eps^3} \}$. Then using Lemma \ref{lem:spec-edges},  the probability that there exist sets $A\subseteq X,  B\subseteq Y$ (of size $a$ and $b$ respectively)  with $e(A,B)\ge h(a,b)$ at step $i_{max}$ (and hence at any step) is at most
\begin{align*}
\sum_{1\le a, b\le n} n^{a+b}  \binom{ab}{h} n^{(-2/3 + 2\eps^3) h}
&\le \sum_{1\le a, b\le n} \exp\cbrac{\sbrac{a+b + \rbrac{-\frac23 +2\eps^3}h} \log n + h \log\rfrac{abe}{h} }\\
&\le \sum_{1\le a, b\le n} \exp\cbrac{  \rbrac{\frac{5}{2}\eps^3-\frac23 }h \log n + h \log\rfrac{abe}{abn^{-2/3+3\eps^3}} }\\
&\le \sum_{1\le a, b\le n} \exp\cbrac{ (1+o(1)) \rbrac{-\frac{1}{2}\eps^3}h \log n  }\\
& \le \sum_{1\le a, b\le n} \exp\cbrac{ -(1+o(1))(a+b) \log n  } = o(1).
\end{align*}
On the second and fourth line we have used the fact that $h \ge 2 \eps^{-3}(a+b)$ and {on the second line we used $h \ge 2 \eps^{-3}abn^{-2/3+3\eps^3}$}. 
\end{proof}

\subsection{Bipartite Independence Number} \label{sec:ind}

Let $I_X \subseteq X$ and $I_Y \subseteq Y$ with \[|I_X| = |I_Y| = \a := 2 \eps^{-1} n^{2/3} \log^{2/3} n. \] We would like to show that the number of open pairs in $I:=I_X \times I_Y$ remains significant throughout the process so that an edge will land in one of these open pairs with high probability. 
Define $Q_I = Q_I(i)$ to be the number of open pairs in $I$ at step $i$. We would like to show that $Q_I \approx \alpha^2 q$ {throughout the entire process, for every choice of $I$; i.e. that the density of open pairs in each $I$ is approximately the same as the global density of open pairs. }

We will track $Q_I$ by writing $Q_I = \wt{Q_I} - A_I$, where $A_I$ represents the effect of ``large" one-step changes. Formally, define
\[
 A_I(i) = \sum_{j \le i} |\D Q_I(j)| \cdot \bfo_{|\D Q_I(j)|> n^{2/3 - 12\eps^{3} }}.
\]
Our motivation for defining $A_I$ is to ensure that the one-step change of $\wtq$ is not too large. We will then be able to apply
a martingale inequality. We will establish dynamic concentration for $\wtq$, and just a crude bound for $A_I$ which can be regarded as an error term. 

\subsubsection{Bounding \texorpdfstring{$A_I$}{AI}}\label{sec:boundA}
In this section, we prove the following lemma which provides an upper bound on $A_I$. 
\begin{lemma}
With high probability, for every $I$ and every $i\le i_{max}$, we have $A_I(i) \le n^{1+45\eps^{3}}$.
\end{lemma}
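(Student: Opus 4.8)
The plan is to bound $A_I(i)$ by controlling two things: how many steps $j\le i_{max}$ can have a ``large'' one-step change $|\Delta Q_I(j)| > n^{2/3-12\eps^3}$, and how large each such change can possibly be. For the second point, note that a single step adds one edge $e_j$ to the graph, and $\Delta Q_I(j)$ counts the open pairs of $I$ that either become closed (because choosing $e_j$ created a ``threat'' for them) or are removed by the choice $e_j=(x,y)$ itself if $(x,y)\in I$. The pairs of $I$ that are closed at step $j$ by $e_j$ are exactly those pairs $(x',y')\in I$ such that $e_j$ together with two previously chosen edges and $(x',y')$ would complete a $K_{2,2}$; equivalently $e_j$ completes a path of length three with $(x',y')$ through two chosen edges. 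The number of such pairs is at most the number of open pairs $e'$ with a particular ``$d_2$-type'' relationship to $e_j$, which by the structure of $K_{2,2}$ is controlled by degrees of the endpoints of $e_j$; using the maximum degree bound from Lemma~\ref{cor:deg}, $\deg(v)\le n^{1/3+3\eps^3}$, we get $|\Delta Q_I(j)| \le (\deg(x)\cdot\deg(y) ) + O(1)= O\!\left(n^{2/3+6\eps^3}\right)$ deterministically on the high-probability event of Lemma~\ref{cor:deg}.

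Next I would bound the number of ``bad'' steps, i.e. steps with $|\Delta Q_I(j)|>n^{2/3-12\eps^3}$. A step $j$ with $e_j=(x,y)$ can only close many pairs of $I$ if $x$ and $y$ already have relatively high degree into a structure interacting with $I$; more precisely, the closed pairs $(x',y')\in I$ arise from paths $x'\!-\!y''\!-\!x''\!-\!y$ where the two middle edges are chosen and $(x',y'')$ — wait, one must track this carefully — the point is that the count is bounded by a product of the form (number of chosen neighbors of $x$ in $I_Y$-ish sets) times (number of chosen neighbors of $y$), so a large $\Delta Q_I(j)$ forces $e_j$ to have an endpoint lying in a relatively dense piece of the graph relative to $I$. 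Summing over all $j\le i_{max}$ and using Lemma~\ref{lem:spec-edges} to bound the probability that any fixed small set of edges witnessing this density is all chosen, together with a union bound over $I$ (there are at most $\binom{n}{\alpha}^2 \le \exp\{O(n^{2/3}\log^{5/3}n)\}$ choices) and over the edge configurations, I would show that w.h.p. the number of bad steps is at most, say, $n^{1/3+20\eps^3}$ or so. Multiplying the number of bad steps by the per-step bound $O(n^{2/3+6\eps^3})$ then yields $A_I(i_{max})\le n^{1+45\eps^3}$ with room to spare in the exponent.

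The main obstacle is the middle estimate: getting a clean bound on how many steps can produce a large $\Delta Q_I$. The crude deterministic bound (every step changes $Q_I$ by at most $O(n^{2/3+6\eps^3})$, and there are $\le i_{max}=\eps n^{4/3}\log^{1/3}n$ steps) only gives $A_I \le O(n^{2+O(\eps^3)})$, which is too weak — we genuinely need to exploit that large changes are rare. The natural way to do this is to observe that a large change at step $j$ requires $e_j$ to be incident to a vertex whose current degree into some controlled neighborhood of $I$ is already $\Omega(n^{1/3-O(\eps^3)})$; one then shows via Lemma~\ref{lem:spec-edges} (applied to the $O(1)$-size edge sets that certify such a local density, anchored at a vertex of $I$) that the expected number of (vertex, time) pairs where this happens is small, and hence the total contribution to $A_I$ summed over the process is $o(n^{1+45\eps^3})$. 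Balancing the union-bound losses (roughly $\exp\{n^{2/3}\,\mathrm{polylog}\}$) against the savings from Lemma~\ref{lem:spec-edges} (roughly $n^{-(2/3-2\eps^3)|F|}$ per certifying edge set, so we need $|F|$ bounded below by a small power of $n$ to kill the union bound) is where the care is needed; since $\alpha = 2\eps^{-1}n^{2/3}\log^{2/3}n$ is itself polynomial in $n$, there is enough room, but the bookkeeping of exponents must be done so that the $45\eps^3$ slack in the target exponent absorbs all the $\eps^3$-order errors coming from Lemmas~\ref{cor:deg} and~\ref{cor:dense} and the degree-of-$d_2$ counting.
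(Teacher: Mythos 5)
Your outer skeleton matches the paper's: bound each large one-step change by the square of the maximum degree, $O(n^{2/3+6\eps^3})$ (note, though, that the pairs incident to an endpoint of the chosen edge contribute up to a second-neighborhood count, not $O(1)$ as you wrote — harmless for this bound), and multiply by the number of ``bad'' steps, which you try to show is about $n^{1/3+O(\eps^3)}$ because a bad step forces the chosen edge into a region that is dense relative to $I$. The genuine gap is in how you propose to bound the number of bad steps. You want a per-$I$ probabilistic estimate: certify each bad step by an $O(1)$-size edge set, apply Lemma~\ref{lem:spec-edges}, and then union over all $\binom{n}{\a}^2$ choices of $I$. This cannot work as stated: Lemma~\ref{lem:spec-edges} with $|F|=O(1)$ gives only a polynomially small probability, while the union over $I$ costs $\exp\{\Theta(n^{2/3}\log^{5/3}n)\}$; even your fallback requirement that $|F|$ be ``a small power of $n$'' is far from enough, since beating that union bound needs $|F|\log n \gg n^{2/3}\log^{5/3}n$, i.e.\ $|F|=\Omega(n^{2/3}\log^{2/3}n)$, certificates of size comparable to $\a$ itself. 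In other words, any probabilistic certificate must aggregate an entire dense bipartite piece at once, which is exactly what Lemma~\ref{cor:dense} already provides (there the union over sets $A,B$ is beaten precisely because the required edge count $h\ge 2\eps^{-3}(a+b)$ grows with $a+b$).

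The missing idea is to make the count of bad steps \emph{deterministic} on the already-established global events, so that no fresh union bound over $I$ is needed. The paper defines $I_X^{(1)}$ (vertices with at least $n^{1/3-16\eps^3}$ neighbors in $I_X$), similarly $I_Y^{(1)}$, and then $I_X^{(2)},I_Y^{(2)}$ (vertices with that many neighbors in $I_X^{(1)}$, resp.\ $I_Y^{(1)}$); the density bound \eqref{eq:dense} forces $|I_X^{(1)}|,|I_Y^{(1)}|<n^{1/3+17\eps^3}$ and $|I_X^{(2)}|,|I_Y^{(2)}|<n^{34\eps^3}$, and together with the degree bound \eqref{eq:deg} this gives $e(I_Y^{(1)},I_X^{(1)})+e(I_X,I_Y^{(2)})+e(I_X^{(2)},I_Y)=O(n^{1/3+37\eps^3})$. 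A deterministic computation, via $|\D Q_I|< d_I(x_i)d_I(y_i)+d_I^{(2)}(y_i)\bfo_{x_i\in I_X}+d_I^{(2)}(x_i)\bfo_{y_i\in I_Y}$, then shows that any step with $|\D Q_I|>n^{2/3-12\eps^3}$ must choose its edge inside one of these three sets, so the number of bad steps is at most $O(n^{1/3+37\eps^3})$ simultaneously for \emph{every} $I$, and the lemma follows upon multiplying by $n^{2/3+6\eps^3}$. You gesture at both Lemma~\ref{cor:dense} and the two-level neighborhood structure, but without this deterministic reduction the middle estimate — the heart of the lemma — remains unproved, and the probabilistic route you sketch would not close.
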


\begin{proof}
 Assume that $\mc{E}_{i_{max}}$ {and \eqref{eq:deg}, \eqref{eq:dense} hold}. Under these assumptions, we will show that for every $I$ and every $i\le i_{max}$, $A_I(i) \le n^{1+45\eps^{3}}$. 
Fix an $I = I_X \times I_Y$.  
 Let $I_X^{(1)}$ be the set of vertices with at least 
$n^{1/3-16\eps^{3}}$ 
 neighbors in $I_X$, and similarly define $I_Y^{(1)}$. 
Let $I_X^{(2)}$ be the set of vertices with at least $n^{ 1/3-16\eps^{3}}$ neighbors in $I_X^{(1)}$, and similarly define $I_Y^{(2)}$. 
 We first claim that in $G_{i_{max}}$,
 \begin{equation}\label{eq:3types}
e(I_Y^{(1)} , I_X^{(1)}) +e(I_X ,I_Y^{(2)})+ e(I_X^{(2)} , I_Y) = O\of{n^{1/3 + 37\eps^{3} } }.
 \end{equation}
 
  Recall that $|I_X| = \a = 2 \eps^{-1} n^{2/3}\log^{2/3}n$.  Then by {\eqref{eq:dense}} we have
 \[ |\ix{X}{1}|\cdot n^{1/3 - 16\eps^{3}} \le e(\ix{X}{1}, I_X) \le 2 \eps^{-3}\max\braces{\a + |\ix{X}{1}|,  \a\cdot |\ix{X}{1}|\cdot n^{-2/3 + 3\eps^3}  }.\]
 If the {maximum above were achieved by the second argument}, we would have a contradiction since $n^{3\eps^3}\log^{2/3}n \ll n^{1/3 - 16\eps^{3}} $. Thus we have $| \ix{X}{1}|\cdot n^{1/3 - 16\eps^{3}} \le 2 \eps^{-3}(\a + |\ix{X}{1}|)$ and so 
 $ |\ix{X}{1}| \le 2 \eps^{-3} \a / (n^{1/3-16\eps^{3}} - 2 \eps^{-3}) $. Thus (since $\ix{Y}{1}$ is similar) we have
\begin{equation}\label{eq:I1}
 |I_X^{(1)}|, |I_Y^{(1)}| < n^{1/3 + 17\eps^{3} }.
\end{equation} 


 Let $I_X^{(2)}$ be the set of vertices with at least $n^{ 1/3-16\eps^{3}}$ neighbors in $I_X^{(1)}$, and similarly define $I_Y^{(2)}$. By {\eqref{eq:dense}}, we have
 \[|\ix{X}{2}|\cdot n^{1/3-16\eps^{3}}\le  e(\ix{X}{2}, \ix{X}{1}) \le 2 \eps^{-3}\max\braces{|\ix{X}{2}| + |\ix{X}{1}|,  |\ix{X}{2}|\cdot |\ix{X}{1}|\cdot n^{-2/3 + 3\eps^3}  }. \]
 The first {argument} must be the maximum otherwise we get a contradiction.
Thus using \eqref{eq:I1}, we have
$|\ix{X}{2}|\cdot n^{1/3-16\eps^{3}}\le  2 \eps^{-3}(n^{1/3 +17\eps^{3} } + |\ix{X}{2}|) $ and so rearranging (and since $\ix{Y}{2}$ is similar) we get
\begin{equation}\label{eq:I2}
 |I_X^{(2)}|, |I_Y^{(2)}| < n^{34\eps^{3}}.
\end{equation} 

Note that by  \eqref{eq:dense}  and \eqref{eq:I1} we have w.h.p.,
\[e(I_Y^{(1)} , I_X^{(1)}) \le 2 \eps^{-3}\max\braces{|I_Y^{(1)}| +|I_X^{(1)}|\, ,\, |I_Y^{(1)} ||I_X^{(1)}|n^{-2/3 + 3\eps^3}  }  =O(n^{1/3 + 17\eps^{3} })\]
since the maximum is the first argument. 
By \eqref{eq:deg} and \eqref{eq:I2} we have $e(I_X ,I_Y^{(2)})+ e(I_X^{(2)} , I_Y) \le O\of{n^{34\eps^{3}}\cdot n^{1/3 + 3\eps^3}} = O\of{n^{1/3 + 37\eps^3}}$ w.h.p.. Thus we have proved \eqref{eq:3types}.
 
 In order to have $|\Delta Q_I(i)|> n^{2/3- 12\eps^{3}}$ the edge $(x_i, y_i)$ chosen at step $i$ must be in one of the following three sets: $I_Y^{(1)} \times I_X^{(1)}$, $I_X \times I_Y^{(2)}$, or $I_X^{(2)} \times I_Y$. Indeed, suppose $(x_i, y_i)$ is not in any of the three sets. Then 
\begin{equation}\label{eq:DQbound}
 |\D Q_I| < d_I(x_i)d_I(y_i) + d^{(2)}_I(y_i)\bfo_{x_i \in I_X} + d^{(2)}_I(x_i)\bfo_{y_i \in I_Y} 
\end{equation}
and since $(x_i, y_i) \notin I_X \times I_Y^{(2)}$, we can bound $d^{(2)}_I(y_i)\bfo_{x_i \in I_X}$ as follows. Either $x_i \notin I_X$ in which case we get 0, or $y_i \notin I_Y^{(2)}$ in which case $y_i$ has at most $n^{1/3 -16\eps^{3}}$ neighbors in $I_Y^{(1)}$ which may have as many as $n^{1/3 + 3\eps^3}$ neighbors in $I_Y$, and $y_i$ has at most $n^{1/3 + 3\eps^3}$  additional neighbors that all have at most $n^{1/3-16\eps^{3}}$ neighbors in $I_Y$. Thus 
\[
d^{(2)}_I(y_i)\bfo_{x_i \in I_X} \le n^{1/3-16\eps^{3}} \cdot n^{1/3 + 3\eps^3} + n^{1/3 + 3\eps^3} \cdot n^{1/3-16\eps^{3}} = O(n^{2/3 -13\eps^{3} }) .
\]
We have the same bound on $d_I^{(2)}(x_i)\bfo_{y_i \in I_Y}$ by symmetry. To bound $d_I(x_i)d_I(y_i)$, note that at least one of $x_i\not \in I_Y^{(1)}$ or  $y_i\not \in I_X^{(1)}$ holds. Thus $d_I(x_i)d_I(y_i) \le n^{1/3 + 3\eps^3}\cdot n^{1/3-16\eps^3} = O(n^{2/3- 13\eps^3})$. Thus we have $|\D Q_I|<O(n^{2/3 -13\eps^{3} }) <  n^{2/3 - 12\eps^{3}}$. 

Thus using \eqref{eq:3types}, we see that the number of steps with $|\D Q_I| \ge n^{2/3-12\eps^{3}}$ can be bounded  by $e(I_Y^{(1)} , I_X^{(1)}) +e(I_X ,I_Y^{(2)})+ e(I_X^{(2)} , I_Y) = O\of{n^{1/3 + 37\eps^{3}}}$. 
So for all $i \le \eps n^{4/3} \log^{1/3} n$ and all $I$, we have 
\[
A_I(i) \le O\of{n^{2/3 + 6\eps^3} n^{1/3 + 37\eps^{3} }}< n^{1+45\eps^{3} }
\]
where we use the fact that the largest possible value of $|\D Q_I(i)|$ is the square of the maximum degree, $n^{1/3 + 3\eps^3}$.

\end{proof}

\subsubsection{Dynamic concentration of \texorpdfstring{$\wtq(i)$}{QItilde}}\label{sec:boundQ}

Define $\mc{E}_i' \subseteq \mc{E}_i$ as the event that $\mc{E}_i$ holds, and that for all $I$ and for all $j \le i$ we have
\[
A_I(j) < n^{1+45\eps^{3} }
\] 
(which we proved holds w.h.p. in the last section) as well as 
\begin{equation}\label{eq:Qtraj}
\wtq(j) \in \alpha^2q\rbrac{t(j)} \pm f(t(j))
\end{equation}
where 
\begin{equation*}
f(t) =  n^{4/3 - 5\eps^3  } e^{t^3+t}.
\end{equation*}
Note that since $t \le \eps \log^{1/3} n$ we have 
$f(t) \le n^{4/3 -  3\eps^3 }.$
We now define $\wtq^+$ and $\wtq^-$ as
\begin{equation*}
\wtq^\pm (i):= \begin{cases} 
& \wtq(i) - \alpha^2q(t) \mp f(t) \;\;\; \mbox{ if $\mc{E}_{i-1}'$ holds}\\
& \wtq^\pm (i-1) \;\;\; \mbox{ otherwise}.
\end{cases}
\end{equation*}
 We will show that $\wtq^+$ is a supermartingale.
 Since $\D \wtq^+(i)=0$ if $\mc{E}_{i}'$ fails to hold, we will assume $\mc{E}_{i}'$ holds. 

Recall that $d_2(uv)$ is the number of potential copies of $K_{2,2}$ containing the edge $uv$ in which two edges of the $K_{2,2}$ other than $uv$ are in $G(i)$ and the last pair is open. By \eqref{eq:open} and \eqref{eq:d2}, we know that  $Q \in (1\pm n^{-10\eps^3 })n^2q$  and  $d_2(u,v) \in (1\pm n^{-10\eps^3 })3n^{2/3}t^2q$ where we recall that $q = q(t) = e^{-t^3}$. We first calculate
\[\Mean{\D Q_I | \mc{F}_i} = -\frac{1}{Q}\sum_{uv \in Q_I}d_2(uv),\]
and since by \eqref{eq:I1} and \eqref{eq:I2} we have
\begin{align*}
\Mean{\D A_I | \mc{F}_i} &\le n^{2/3 + 6\eps^3} \cdot \frac{|I_Y^{(1)} \times I_X^{(1)}|+|I_X \times I_Y^{(2)}|+ |I_X^{(2)} \times I_Y|}{Q}\\
& \le n^{2/3 + 6\eps^3} \cdot \frac{O\of{n^{2/3 + 35\eps^3}}}{(1+o(1))n^{2-\eps^3}} \le n^{-2/3 + 43\eps^{3}}
\end{align*}
we see that 
\begin{equation}\label{eq:qtildechange}
\Mean{\D \wtq | \mc{F}_i} = -\frac{1}{Q}\sum_{uv \in Q_I}d_2(uv) + O\of{n^{-2/3 + 43\eps^{3}}}.
\end{equation}
{Now we use Taylor's theorem to bound the one-step change of $\a^2 q(t) + f(t)$, the deterministic terms in $\wtq^+$. We have
\begin{align}
&\a^2 q\rbrac{t+\frac{1}{n^{4/3}}} + f\rbrac{t+\frac{1}{n^{4/3}}} - \a^2 q(t) - f(t) \nonumber\\
& = \rbrac{\a^2 q'(t) + f'(t)} \frac{1}{n^{4/3}} + O\rbrac{\rbrac{\a^2 q''(t) + f''(t)} \frac{1}{n^{8/3}}} \nonumber\\
& = \rbrac{-\a^2 3t^2q + f'} n^{-4/3} + O\rbrac{n^{-4/3 +2\eps^3}}\label{eq:detchange}
\end{align}
where on the second line when we write $O\rbrac{\rbrac{\a^2 q''(t) + f''(t)} \frac{1}{n^{8/3}}}$ we mean an absolute bound  on the function inside the big-O that holds holds for all $t \le \eps \log^{1/3}n$. Considering the particular functions $q(t), f(t)$ we arrive at the big-O term on the last line. 
}

Now we will do the supermartingale calculation for $\wtq^+$. Throughout the following, keep in mind that $\a^2 3t^2 q n^{-4/3} \le n^{2\eps^3}$, and that \[n^{-5\eps^3 } \le\frac{f}{\a^2q}  \le n^{-2\eps^3  }.\] 

Thus, {using \eqref{eq:qtildechange}, \eqref{eq:detchange}, and the fact that we have dynamic concentration in the event $\mc{E}_i'$} we have that
 
\begin{align*}
\Mean{\D \wtq^+ | \mc{F}_i} &\le -\frac{1}{(1+n^{-10\eps^3 })n^2q}\of{\a^2q - n^{1+45\eps^{3}} - f}\of{(1-n^{-10\eps^3 })3n^{2/3}t^2q} \\
& \hspace{3cm} + \alpha^2 3t^2 q n^{-4/3} - n^{-4/3}f'+O\of{n^{-2/3 + 43\eps^{3} } + {n^{-4/3 +2\eps^3}}}\\
& =  \alpha^2 3t^2 q n^{-4/3} \sbrac{-\frac{\of{1 - \frac{n^{1+45\eps^{3} }+ f}{\a^2q}}\of{1 - n^{-10\eps^3 }}}{\of{1 + n^{-10\eps^3 }}}+1}- n^{-4/3}f'+O\of{n^{-2/3 + 43\eps^{3} }}\\
& \le \alpha^2 3t^2 q n^{-4/3} \sbrac{\frac{f}{\a^2q}+ 2n^{-10\eps^3 }}- n^{-4/3}f'+O\of{n^{-10\eps^3}}\\
&\le n^{-4/3}\sbrac{3t^2f - f'} +O\of{n^{-8\eps^3}} \le 0.
\end{align*} 
In the third line, 
we have used the geometric series expansion $1/(1+n^{-10\eps^3 }) = 1 - n^{-10\eps^3 } + O(n^{-2 0\eps^3 })$ to  calculate  
\begin{align*}\frac{\of{1 - \frac{n^{1+45\eps^{3} }+ f}{\a^2q}}\of{1 - n^{-10\eps^3 }}}{\of{1 + n^{-10\eps^3 }}} &= 1 - \frac{n^{1+45\eps^{3}} + f}{\a^2q} - n^{-10\eps^3 } - n^{-10\eps^3 } + O\of{n^{-20\eps^3 } + \frac{f}{\a^2q}n^{-10\eps^3 }  }  \\
&  = 1-\frac{f}{\a^2q} - 2n^{-10\eps^3 } +O\of{n^{-12\eps^3}}.
\end{align*}
 where we used that $\frac{n^{1+45\eps^{3}}}{\a^2q} \ll n^{-20\eps^3 }$.
%
%
Thus $\wtq^+$ is a supermartingale.  

We will use the following martingale inequality found in \cite{Fre}.

\begin{lemma}[Freedman] \label{lem:Freedman}
Let $Y(i)$ be a supermartingale, with $\Delta Y(i) \leq C$ for all $i$, and $V(i) :=\displaystyle \sum_{k \le i} Var[ \Delta Y(k)| \mathcal{F}_{k}]$  Then
$$ \P\left[\exists i: V(i) \le v, Y(i) - Y(0) \geq \lambda \right] \leq \displaystyle \exp\left(-\frac{\lambda^2}{2(v+C\lambda) }\right).$$ \end{lemma}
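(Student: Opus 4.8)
The plan is to prove this classical Bernstein-type tail bound by the exponential supermartingale method. Replacing $Y(i)$ by $Y(i)-Y(0)$ we may assume $Y(0)=0$, and we may assume $C,\lambda>0$ since the claim is trivial otherwise. The one elementary input I would use is that $u\mapsto(e^{u}-1-u)/u^{2}$ (extended by the value $1/2$ at $u=0$) is nondecreasing on $\mathbb{R}$; consequently, writing $\varphi(\theta):=\of{e^{\theta C}-1-\theta C}/C^{2}$ for $\theta>0$, every real $x\le C$ satisfies $e^{\theta x}\le 1+\theta x+\varphi(\theta)x^{2}$.

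First I would fix $\theta>0$ (to be optimized at the end) and introduce the process $Z(i):=\exp\of{\theta Y(i)-\varphi(\theta)V(i)}$, so that $Z(0)=1$. The key step is to check that $Z$ is a nonnegative supermartingale. Writing $W=\D Y(i)\le C$, the displayed inequality together with the supermartingale hypothesis $\E\sbrac{W\mid\mathcal{F}_{i-1}}\le 0$ and $1+y\le e^{y}$ gives
\[
\E\sbrac{e^{\theta W}\mid\mathcal{F}_{i-1}}\le 1+\varphi(\theta)\,\E\sbrac{W^{2}\mid\mathcal{F}_{i-1}}\le \exp\of{\varphi(\theta)\,\E\sbrac{W^{2}\mid\mathcal{F}_{i-1}}},
\]
and since $V(i)-V(i-1)$ is $\mathcal{F}_{i-1}$-measurable this yields $\E\sbrac{Z(i)\mid\mathcal{F}_{i-1}}\le Z(i-1)$. (A small point of care: the computation naturally uses $V(i)$ built from the conditional second moments $\E\sbrac{(\D Y(k))^{2}\mid\mathcal{F}_{k-1}}$, which dominate the conditional variances; in applications such as the one above the distinction is immaterial to the resulting bound, and for genuine martingales — the setting of Freedman's original statement — the two agree.)

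Next I would let $i^{\star}$ be the first index $i$ with $V(i)\le v$ and $Y(i)\ge\lambda$, and $i^{\star}=\infty$ if no such index exists. Truncating at a finite horizon and applying the optional stopping theorem (equivalently, the maximal inequality for nonnegative supermartingales), on the event $\cbrac{i^{\star}<\infty}$ we have $Z(i^{\star})\ge\exp\of{\theta\lambda-\varphi(\theta)v}$, whence
\[
\P\sbrac{\exists i: V(i)\le v,\ Y(i)-Y(0)\ge\lambda}\le \exp\of{-\theta\lambda+\varphi(\theta)v}.
\]
Finally I would optimize over $\theta$: using the standard estimate $\varphi(\theta)\le\dfrac{\theta^{2}/2}{1-\theta C/3}$ valid for $0<\theta C<3$, the choice $\theta=\dfrac{\lambda}{v+C\lambda}$ gives $-\theta\lambda+\varphi(\theta)v\le-\dfrac{\lambda^{2}}{2(v+C\lambda)}$ after a short computation, which is exactly the asserted bound.

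I do not expect a serious obstacle here, since this is essentially the textbook Chernoff-for-martingales argument; the only points needing (routine) care are the bookkeeping around conditional variance versus conditional second moment mentioned above, and the measure-theoretic justification for passing to an infinite time horizon in the maximal inequality, which one handles by working up to a finite step and then letting it tend to infinity.
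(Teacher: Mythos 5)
The paper does not actually prove this lemma --- it is quoted from Freedman's paper \cite{Fre} (in the supermartingale form standard in this literature) and used as a black box --- so you are supplying a proof where the paper has none. Your route is the standard one (exponential supermartingale $Z(i)=\exp(\theta Y(i)-\varphi(\theta)V(i))$, optional stopping, Bernstein-type optimization of $\theta$), and the optimization step with $\varphi(\theta)\le\frac{\theta^2/2}{1-\theta C/3}$ and $\theta=\lambda/(v+C\lambda)$ does check out. However, there is one genuine gap, and it is exactly the point you tried to wave away: the lemma is stated for a \emph{supermartingale} with $V(i)$ built from conditional \emph{variances}, while your one-step estimate gives $\E[e^{\theta W}\mid\mathcal{F}]\le\exp\bigl(\varphi(\theta)\,\E[W^2\mid\mathcal{F}]\bigr)$, i.e.\ it makes $Z$ a supermartingale only when $V$ is built from conditional second moments. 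Since $\E[W^2\mid\mathcal{F}]\ge\Var[W\mid\mathcal{F}]$, the domination runs the wrong way: the event $\{V(i)\le v\}$ in the statement (variance version) \emph{contains} the event $\{\sum_k\E[(\Delta Y(k))^2\mid\mathcal{F}_k]\le v\}$ that your argument controls, so the second-moment version you prove does not imply the stated lemma. Saying ``for genuine martingales the two agree'' does not help, because the lemma (and the paper's application to $\wtq^+$) concerns supermartingales, where the conditional mean can be strictly negative and the second moment strictly exceeds the variance.

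The good news is that the gap is local and fixable within your framework: the one-step Bennett bound does hold with the conditional variance under the hypotheses $W\le C$ and $\mu:=\E[W\mid\mathcal{F}]\le 0$. Indeed, set $c:=\frac{-\mu}{C-\mu}\in[0,1)$ and $W':=(1-c)W+cC$. Then $W'\ge W$ pointwise (as $W\le C$), $W'\le C$, $\E[W'\mid\mathcal{F}]=0$, and $\Var[W'\mid\mathcal{F}]=(1-c)^2\Var[W\mid\mathcal{F}]\le\Var[W\mid\mathcal{F}]$, so your displayed inequality applied to $W'$ gives
\[
\E\bigl[e^{\theta W}\mid\mathcal{F}\bigr]\le\E\bigl[e^{\theta W'}\mid\mathcal{F}\bigr]\le 1+\varphi(\theta)\,\E\bigl[(W')^2\mid\mathcal{F}\bigr]=1+\varphi(\theta)\Var[W'\mid\mathcal{F}]\le\exp\bigl(\varphi(\theta)\Var[W\mid\mathcal{F}]\bigr),
\]
after which the rest of your argument (stopping time, optional stopping at a finite horizon, optimization of $\theta$) goes through verbatim with the variance-based $V(i)$ of the statement. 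You are right that for this paper's application the distinction is immaterial, since \eqref{eq:var} bounds $\Var[\D\wtq\mid\mathcal{F}_i]$ by the conditional second moment anyway, but as a proof of the lemma as stated the extra comparison step above (or an equivalent argument) is needed.
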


In order to apply Lemma \ref{lem:Freedman}, we must bound the variance. 

\begin{align}
&\Var\sqbs{\D \wtq | \mc{F}_i} \le \Mean{\of{\D \wtq}^2| \mc{F}_i} \le n^{2/3-12\eps^{3}} \cdot \Mean{\abs{\D \wtq}\,|\, \mc{F}_i} \le n^{2/3-12\eps^{3}}. \label{eq:var}
\end{align}
So by Lemma \ref{lem:Freedman} using $v=n^{2 - 12\eps^{3}} \log n$, $C=n^{2/3-12\eps^{3}}$ and $\lambda = - \wtq^+(0) = f(0)= n^{4/3 - 5\eps^3}$ we see that the probability that $\wtq^+(i_{max})>0$ is at most
\begin{align*} \P\left[\exists i: V(i) \le v, \wtq^+(i) - \wtq^+(0) \geq f(0) \right] &\leq \displaystyle \exp\cbrac{-\frac{f(0)^2}{2(n^{2 - 12\eps^{3}} \log n+f(0) n^{2/3 - 12\eps^{3}} ) }}\\ 
&\le \exp \cbrac{-\Omega\of{n^{2/3  {+\eps^3 }}}}
\end{align*}
which is small enough to overcome a union bound over all choices of $I$ since the number of such choices is 
\begin{equation}\label{eq:Ichoices}
\binom{n}{\a}^2 \le \exp\cbrac{2 \a \log \of{\frac{ne}{\a}} } < \exp\cbrac{ 2 \eps^{-1} n^{2/3} \log^{5/3} n}.
\end{equation}

In a completely analogous fashion, we may prove that $\wtq^-$ remains non-negative until time $i_{max}$ for every $I$, w.h.p. Thus $\mc{E}_{i_{max}}'$ holds w.h.p. for all $i\le i_{max}$.

\subsubsection{Final bound on bipartite independence number}

We have shown that the event $\mc{E}_{i_{max}}'$ holds w.h.p., in which case $Q = n^2q (1+o(1))$ and for all $I$,  $Q_I = \a^2 q(1+o(1))$ for all $i \le i_{max}$. The probability of choosing an edge in $I$ at any given step is $(1+o(1))\frac{\a^2q}{n^2q} \ge {3 \eps^{-2}} n^{-2/3} \log^{4/3} n.$  So the probability that no edge from $I$ is  chosen in the first $i_{max}$ steps is at most 
\[
\of{1-{3 \eps^{-2}} n^{-2/3} \log^{4/3} n}^{\eps n^{4/3} \log^{1/3} n} \le \exp\cbrac{-{3 \eps^{-1}} n^{2/3} \log^{5/3} n}
\]
which is small enough to overcome a union bound over \eqref{eq:Ichoices} many choices. 

\section{Conclusion}
In \cite{CR}, Caro and Rousseau write ``our knowledge of $b(2, n)$ closely parallels that of $R(C_4, K_n)$.'' In this paper, we have furthered this parallel, but the most intriguing open question (as mentioned in \cite{CR}) is to prove or disprove that $b(2, t) = o(t^{2-\eps})$ for some $\eps > 0$.  Erd\H{o}s famously conjectured that $R(C_4, K_n) = o(n^{2-\eps})$. It may be the case that improving the upper bound on $b(2,t)$ is a simpler task than improving that of $R(C_4, K_n)$. Another direction would be to extend our technique to find new lower bounds on $b(s, t)$ with $s$ fixed and $t \to \infty$ or on the bipartite Ramsey number of a fixed cycle versus a large bipartite clique. The techniques used in this paper could be applied here, but we have opted not to pursue this for the sake of brevity. Most likely the techniques in \cite{mike2, lutz2} can also be used to show that the bipartite $K_{2,2}$-free process terminates in $O(n^{4/3} \log^{1/3}n)$ steps, matching the lower bound we proved in this paper.

\bibliographystyle{plain}

\end{document}